\documentclass[12pt,reqno]{amsart}
\usepackage{amssymb}
\textwidth=15cm \textheight=23cm
\oddsidemargin=0.5cm \evensidemargin=0.5cm


\numberwithin{equation}{section}

\newtheorem{theorem}{Theorem}[section]
\newtheorem{lemma}[theorem]{Lemma}

\theoremstyle{remark}

\newcommand{\R}{\mathbb{R}}

\newcommand{\N}{\mathbb{N}}

\newcommand{\cL}{\mathcal{L}}
\newcommand{\cT}{\mathcal{T}}
\newcommand{\K}{\mathcal{K}}

\newcommand{\U}{\underline}

\newcommand{\al}{\alpha}
\newcommand{\be}{\beta}

\newcommand{\De}{\Delta}

\newcommand{\ka}{\kappa}

\newcommand{\fy}{\varphi}
\newcommand{\e}{\varepsilon}
\newcommand{\p}{\partial}
\newcommand{\la}{\lambda}
\newcommand{\de}{\delta}

\newcommand{\ta}{\tau}
\newcommand{\s}{\sigma}

\newcommand{\x}{\xi}

\newcommand{\na}{\nabla}
\newcommand{\I}{\infty}
\newcommand{\LR}[1]{\langle {#1} \rangle}
\newcommand{\lec}{{\, \lesssim \, }}
\newcommand{\gec}{{\, \gtrsim \, }}

\newcommand{\br}{\overline}
\newcommand{\ti}{\widetilde}

\newcommand{\cS}{\mathcal{S}}

\newcommand{\supp}{\operatorname{supp}}

\newcommand{\EQ}[1]{\begin{equation} \begin{split} #1
 \end{split} \end{equation}}
\newcommand{\pr}{\\ &}
\newcommand{\pt}{&}
\newcommand{\pq}{\quad }

\newcommand{\CAS}[1]{\begin{cases} #1 \end{cases}}

\newcommand{\IN}[1]{\text{ in }#1}



\begin{document}

\title[Threshold solutions in the case of mass-shift]{Threshold solutions \\ 
in the case of mass-shift \\ for the critical Kline-Gordon equation}

\author{Slim Ibrahim}
\address{Department of Mathematics and Statistics \\ University of Victoria \\
 PO Box 3060 STN CSC \\ Victoria, BC, V8P 5C3\\ Canada}
\email{ibrahim@math.uvic.ca}
\thanks{S. I.  is partially supported by NSERC\# 371637-2009 grant and a start up fund from University of Victoria.}

\author{Nader Masmoudi}
\address{New York University \\ The Courant Institute for Mathematical Sciences}
\email{masmoudi@courant.nyu.edu}
\thanks{N. M. is partially supported by an NSF Grant DMS-0703145}

\author{Kenji Nakanishi}
\address{Department of Mathematics, Kyoto University}
\email{n-kenji@math.kyoto-u.ac.jp}

\subjclass[2010]{35L70, 35B40, 35B44, 47J30} 

\begin{abstract}
We study global dynamics for the focusing nonlinear Klein-Gordon equation with the {\it energy-critical} nonlinearity in two or higher dimensions when the energy equals the threshold given by the ground state of a {\it mass-shifted} equation, and prove that the solutions are divided into scattering and blowup. In short, the Kenig-Merle scattering/blowup dichotomy \cite{KM1,KM2} extends to the threshold energy in the case of mass-shift for the critical nonlinear Klein-Gordon equation. 
\end{abstract}

\maketitle 

\tableofcontents 

\section{Introduction}
In this paper, we continue from \cite{ScatBlow} the study of global dynamics for the nonlinear Klein-Gordon equation (NLKG) 
\EQ{ \label{NLKG}
 \ddot u - \De u + u = f'(u), \pq u(t,x):\R^{1+d}\to\R,}
with the focusing energy-critical nonlinearity $f(u)$ in two or higher dimensions 
\EQ{ \label{f}
 \CAS{f(u)\sim \la\frac{e^{\al|u|^2}-1-\al|u|^2-\frac{\al^2}{2}|u|^4}{1+|u|^\be} &(d=2) \\ f(u)=|u|^{2^\star}/2^\star, \pq 2^\star:=2d/(d-2) &(d\ge 3)}}
where $\al>0$, $\be\ge 2$, and $\la>0$ are given constants. See \S\ref{def exp} for more precise assumption for $d=2$. 
We are interested in the cases where NLKG \eqref{NLKG} {\it does not} have the ground state, but the {\it mass-shifted} equation 
\EQ{ \label{MS-NLKG}
 \ddot u - \De u + cu = f'(u)}
with some $c\in(0,1)$ for $d=2$ and $c=0$ for $d\ge 3$ does have the ground state $Q$. Here ``ground state" refers to a positive solution of
\EQ{ \label{MS-stat}
 -\De Q + cQ = f'(Q)}
with the least energy among all the static solutions. The conserved energy for \eqref{MS-NLKG} is denoted by 
\EQ{
 E^{(c)}(u):=\int_{\R^d}\frac{|\dot u|^2+|\na u|^2+c|u|^2}{2}-f(u)dx,}
while we omit $(c)$ when $c=1$, i.e.~for \eqref{NLKG}. 

Kenig and Merle \cite{KM1,KM2} proved that all solutions with energy below the ground state for the nonlinear wave equation (NLW) and the nonlinear Schr\"odinger equation (NLS) with the critical power $f(u)=|u|^{2^\star}$ ($d\ge 3$) are divided into the scattering and the blowup, distinguished by some explicit variational conditions. 
We extended it in \cite{ScatBlow} to NLKG including the subcritical and the critical cases, where the distinction was given by the sign of scaling derivatives of the static energy. 

A new feature in \cite{ScatBlow} is that the ground state for the critical NLKG may be absent, but the Kenig-Merle dichotomy is still valid below the ground state $Q$ of the {\it mass-shifted} equation \eqref{MS-NLKG}. 
The phenomenon of mass-shift in two dimensions has been studied in \cite{ScatBlow,TM}, in view of the Trudinger-Moser inequality of the form
\EQ{
 \|\na \fy\|_2 \le \ka \implies \int_{\R^2}f(u)dx \le c\|u\|_2^2/2,}
where $\|\cdot\|_p$ denotes the $L^p(\R^d)$ norm. In particular, necessity of the negative power $\be\ge 2$ in \eqref{f} was revealed in \cite{TM}. 

Since the mass-shifted ground state $Q$ does not solve the NLKG \eqref{NLKG} without the mass-shift, it cannot be a definite obstruction for the scattering/blowup dichotomy, unlike the cases of NLW/NLS where the ground state is a true solution which neither scatters or blows up. Therefore it is natural to ask what happens on the threshold energy, expecting different dynamical pictures from NLW/NLS. 

In the cases of NLW/NLS with the threshold energy, Duyckaerts and Merle \cite{DM1,DM2} proved that there are essentially three new solutions only, in addition to the scattering/blowup dichotomy: $Q$ and $W_\pm$, where $W_\pm$ converge exponentially to $Q$ as $t\to\I$, while $W_+$ blows up in $t<0$ and $W_-$ scatters as $t\to-\I$. See also \cite{DKM} for a revised proof of the global existence. 

We will prove that the dynamics in the mass-shifted case is simpler, where the Kenig-Merle picture essentially extends. The blowup part is a small modification of the Payne-Sattinger argument \cite{PS} with variational estimates on the mass-shifted threshold. However, the scattering part seems highly nontrivial and the proof is quite different between the two dimensional case and the higher dimensional case. 

The main difficulty in the 2D case is that as the solution concentrates the kinetic energy $\|\na u\|_2$, we might have blowup for the Strichartz estimates on the nonlinearity, which typically take the form 
\EQ{
 \|f'(u)\|_{L^1_tL^2_x} \lec C(\|u\|_S),}
for some Strichartz space-time norm $\|u\|_S$ for the Klein-Gordon equation. Note that such an estimate is much stronger than the energy bound $f(u)\in L^1_x$, especially for the exponential nonlinearity. Hence we need first to preclude concentration of kinetic energy before starting any Strichartz-type analysis, including the Kenig-Merle approach and the Bahouri-G\'erard profile decomposition. What saves us is the fact that the variational lower bound on the scaling derivative of static energy 
\EQ{
 K_2(u):=\int_{\R^d}[2|\na u|^2-d(uf'-2f)(u)]dx \gec \|\na u\|_2^2}
does not degenerate at the threshold in the case of mass-shift. This might look surprising in view of the Trudinger-Moser inequalities breaking down beyond the threshold, but it is not so strange if one takes account of the fact that the mass-shifted ground state has strictly bigger energy 
\EQ{
 E(Q) = E^{(c)}(Q) + (1-c)\|Q\|_2^2}
than the threshold $E^{(c)}(Q)$. Thus our analysis in the 2D case is mostly variational, reducing to the arguments below the threshold \cite{ScatBlow} or in the defocusing case \cite{2DcNLKG}. 

In the higher dimensional case, the Strichartz estimate causes no problem, but all the variational estimates do degenerate as the energy concentrates. Hence we use the profile decomposition to reduce the concentration problem to the scaling limit, i.e.~the critical NLW on the threshold, which was completely solved by Duyckaerts and Merle \cite{DM2}. Relying crucially on their classification, our main task then is to preclude concentrating convergence to the NLW ground state, either in finite time or at the time infinity. It is carried out by exploiting the fact that the mass term of NLKG induces time oscillation especially when the energy is concentrating. It is noteworthy that here we use the best constant in a variational estimate on the scaling derivative of static energy. 

In order to state our main result, we need to introduce some notation as well as the precise assumptions on $f$ in two dimensions, which is the same as in \cite{ScatBlow}. Let $F$ be the nonlinear part of energy 
\EQ{
 F(\fy):=\int_{\R^d}f(\fy(x))dx.}
Let $D$ be the scaling derivative acting both on functions and on functionals 
\EQ{
 Df(u):=uf'(u),\pq DF(\fy):=\int_{\R^d}Df(\fy(x))dx.}

\subsection{Exponential nonlinearity in two dimensions}\label{def exp}
First we assume that $1$ is the true mass coefficient in NLKG \eqref{NLKG}. Namely $f:\R\to\R$ is $C^2$ satisfying 
\EQ{ \label{f 2D-0}
 f\in C^2(\R;\R), \pq f(0)=f'(0)=f''(0)=0.}
For the variational argument, we need some monotonicity and convexity, 
\EQ{ \label{f 2D-1}
 \exists p>4,,\pq (D-p)f \ge 0, \ (D-2)(D-p)f \ge 0,}
and for the global Strichartz estimate, we need some decay at $u=0$: 
\EQ{ \label{f 2D-2}
 \exists p>4,\pq \limsup_{|u|\to 0}|u|^{-p}|D^2f(u)|<\I.}
In the last condition, $D^2f$ can be replaced with $|u|^2f''(u)$. They are satisfied by focusing powers $f(u)=\la|u|^p$ for $p>4$ and $\la>0$, as well as their sum or series. 

The exponential behavior is described for large $|u|$ by the following assumptions. 
\EQ{ \label{f 2D-3}
 \lim_{|u|\to\I}\frac{Df(u)}{f(u)}=\I,\pq \exists\ka_0>0,\ \CAS{\forall\ka>\ka_0,  &\lim_{|u|\to\I}e^{-\ka|u|^2}f''(u)=0, \\ \forall\ka<\ka_0, &\lim_{|u|\to\I}e^{-\ka|u|^2}f(u)=\I.}}
It is easy to see that they are satisfied if 
$f(u)=e^{\ka_0|u|^2}g(u)$, with $g(u)$ satisfying $\limsup_{|u|\to\I}|u|^{-p}|g''(u)|<\I$ and $\liminf_{|u|\to\I}|u|^p g(u)>0$ for some $p>0$. 
The most crucial assumption for the mass-shift is the following
\EQ{ \label{f 2D-4}
 c:=\sup\{2F(\fy)\|\fy\|_2^{-2}\mid 0\not=\fy\in H^1(\R^2),\ \ka_0\|\na\fy\|_2^2\le 4\pi\}<1.}
It was shown in \cite{TM} that the necessary and sufficient condition for $c<\I$ is 
\EQ{ \label{TM cond}
 \limsup_{|u|\to 0}|u|^{-2}f(u)<\I\ \text{ and }\ \limsup_{|u|\to\I}e^{-\ka_0|u|^2}|u|^2f(u)<\I.}
Hence the condition $c<1$ is satisfied by $\la f(u)$ for sufficiently small $\la>0$. Finally, we need the following assumption 
\EQ{ \label{bound on Df}
 \limsup_{|u|\to\I} e^{-\ka_0|u|^2}Df(u)<\I,}
which is not so far from the second of \eqref{TM cond}. Indeed, \eqref{bound on Df} is sufficient for the latter, while $\liminf_{|u|\to\I} e^{-\ka_0|u|^2}Df(u)<\I$ is necessary. 

\subsection{Ground state and the scaling derivative of energy}
For the above $f$ i.e.~either \eqref{f} with $d\ge 3$ or \eqref{f 2D-0}--\eqref{f 2D-4} with $d=2$, there is the ground state as the mini-max critical point for the static energy 
\EQ{
 J^{(c)}(\fy):=\int_{\R^d}\frac{|\na\fy|^2+|\fy|^2}{2}-f(\fy) dx}
for the scaling family $\fy_{\al,\be}^\la:=e^{\al\la}\fy(x/e^{\be\la})$, for any $(\al,\be)\in\R^2$ satisfying 
\EQ{ \label{range albe}
 \al\ge 0,\pq 2\al+d\be\ge 0,\ 2\al+(d-2)\be\ge 0,\ (\al,\be)\not=(0,0).}
More precisely, there is a radial $Q\in H^2(\R^d)$ solving the mass-shifted static equation \eqref{MS-stat}, as well as the constrained minimization 
\EQ{ \label{def m}
 J^{(c)}(Q)=m^{(c)} 
 :\pt= \inf\{J^{(c)}(\fy) \mid 0\not=\fy\in H^1(\R^d),\ K_{\al,\be}^{(c)}(\fy)=0\},}
where $K_{\al,\be}^{(c)}$ is the scaling derivative 
\EQ{
 K_{\al,\be}^{(c)}(\fy)\pt:=\cL_{\al,\be}J^{(c)}(\fy):=\p_\la|_{\la=0}J^{(c)}(\fy_{\al,\be}^\la)
 \pr=\LR{-\De\fy+c\fy-f'(\fy)|(\al-\be x\cdot\na)\fy}
 \pr=\int_{\R^d}\left[\al+\be\frac{d-2}{2}\right]|\na\fy|^2+\left[\al+\be\frac{d}{2}\right]|\fy|^2-(\al D+\be d)f(\fy)dx.}
See \cite{ScatBlow} for a proof of the existence of $Q$, where it was also proved that for any $a>c$
\EQ{
 m^{(a)}=m^{(c)}, }
but not achieved by $J^{(a)}(\fy)$. Henceforth we will omit $(c)$, i.e.~$m:=m^{(c)}$. 

We will mainly use the following $K$'s
\EQ{
 \pt K_0^{(c)}(\fy):=K_{1,0}^{(c)}(\fy)=\|\na\fy\|_2^2+c\|\fy\|_2^2-G_0(\fy), \pq G_0:=DF, 
 \pr K_\I^{(c)}(\fy):=K_{0,1}^{(c)}(\fy)=\frac{d-2}{2}\|\na\fy\|_2^2+\frac{c}{2}\|\fy\|_2^2-F(\fy), 
 \pr K_2^{(c)}(\fy):=K_{d,-2}^{(c)}(\fy)=2\|\na\fy\|_2^2-G_2(\fy), \pq G_2:=d(D-2)F,}
where the index $p$ of $K_p$ refers to the scaling which preserves $\|\fy_{\al,\be}^\la\|_{L^p(\R^d)}$. 

\subsection{Main result} 
Local wellposedness in the energy space $(u,\dot u)\in H^1(\R^d)\times L^2(\R^d)$ is known in the critical case, both for $d\ge 3$ \cite{GV} and for $d=2$ \cite{IMM2}, though the existence time is not uniformly bounded in terms of the norm. Hence we can extend any local solution uniquely to the maximal existence interval in both positive and negative time directions, where the solution is continuous in the energy space. Blow-up is defined by that the solution cannot be extended continuously beyond some finite time. Scattering for $t\to\I$ means that there is a solution $v$ of the free Klein-Gordon equation such that $u-v\to 0$ as $t\to\I$ in the energy space. The scattering for $t\to-\I$ is defined similarly. 

\begin{theorem}
Let $d\ge 3$ with \eqref{f} or $d=2$ with \eqref{f 2D-0}--\eqref{f 2D-4} and \eqref{bound on Df}. If $d=2$ then let $c$ be given by \eqref{f 2D-4}, otherwise let $c=0$. Define $m>0$ by \eqref{def m}. Then for any solution in the energy space $(u,\dot u)\in H^1(\R^d)\times L^2(\R^d)$ with $E(u)=m$ 
satisfies one of the following two. Let $I$ be the maximal existence interval. 
\begin{enumerate}
\item $I=\R$ and $u$ scatters both for $t\to\I$ and for $t\to-\I$, and $K_{\al,\be}(u)\ge 0$ for all $t\in\R$ and for all $(\al,\be)$ in \eqref{range albe}. 
\item $I$ is bounded, i.e.~$u$ blows up both in positive and negative times, and $K_{\al,\be}(u)<0$ for all $t\in I$ and for all $(\al,\be)$ in \eqref{range albe}. 
\end{enumerate}
\end{theorem}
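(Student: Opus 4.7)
The plan is to establish the dichotomy via a variational rigidity at the threshold, then apply Kenig-Merle concentration-compactness on the scattering side and Payne-Sattinger convexity on the blowup side.

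I would first prove a variational rigidity statement: for any $\fy\in H^1(\R^d)\setminus\{0\}$ with $J(\fy)\le m$, one has $K_{\al,\be}(\fy)\ne 0$ for every $(\al,\be)$ in the range \eqref{range albe}. Indeed, if $K_{\al,\be}(\fy)=0$ with $\fy\ne 0$, then $\fy$ is admissible in the minimization \eqref{def m} defining $m^{(1)}=m$; since that infimum is \emph{not} attained for $a=1>c$ (as stated after \eqref{def m}, from \cite{ScatBlow}), one gets the strict inequality $J(\fy)>m$, contradicting $J(\fy)\le E(u,\dot u)=m$. Applied to a solution $u$ with $E(u)=m$, conservation of energy combined with this rigidity forces the sign of $K_{\al,\be}(u(t))$ to be constant along $I$: a putative sign change would yield, by an intermediate-value and minimality argument, a time $t_*$ at which $u(t_*)\ne 0$ and $K_{\al,\be}(u(t_*))=0$, which is excluded. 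Continuity in $(\al,\be)$ and connectedness of the admissible region \eqref{range albe} then show the sign is also independent of $(\al,\be)$, producing exactly the two alternatives.

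On the negative side, a threshold adaptation of the variational bound from \cite{ScatBlow} gives a uniform
\[
 -K_0(u(t)) \ge \de_0 > 0
\]
whenever $K_0(u(t))<0$ and $E(u)=m$. Feeding this into the virial identity $\tfrac{d^2}{dt^2}\tfrac{1}{2}\|u\|_2^2 = \|\dot u\|_2^2 - K_0(u)$ and running a standard Payne-Sattinger convexity argument \cite{PS} forces blowup in finite time in both directions, so $I$ is bounded.

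The positive side is the heart of the proof. In dimension $d=2$, the key observation from the introduction is that the bound $K_2(u)\gec\|\na u\|_2^2$ remains non-degenerate up to the mass-shifted threshold; this yields an a priori upper bound on $\|\na u\|_2$ and precludes the concentration of kinetic energy that would otherwise destroy the Strichartz estimate on $f'(u)$. Once that is in hand, the 2D Kenig-Merle/Bahouri-G\'erard machinery of \cite{ScatBlow,2DcNLKG} produces scattering by constructing a minimal non-scattering element, compactifying its orbit modulo modulation, and killing it via a localized virial. In dimensions $d\ge 3$, the Strichartz side is straightforward but all variational estimates degenerate at threshold, so I would run the full profile decomposition on a putative non-scattering sequence: any surviving profile must concentrate and, after rescaling, converge to a solution of the critical NLW at its own threshold energy, which by \cite{DM2} is $W$ or one of $W_\pm$. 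Ruling out these three candidates is the main obstacle; I plan to do so by a modulation analysis exploiting the NLKG mass term $cu$, which on concentrating sequences is a lower-order but time-oscillatory perturbation incompatible with the exponential convergence to $W$ demanded by Duyckaerts-Merle's rigidity. The best constant in the scaling-derivative inequality, as hinted in the introduction, enters at this step to close the oscillation argument. Once the three NLW-threshold profiles are excluded, the Kenig-Merle extraction of a compact-orbit critical element and its rigidity via a localized virial complete the proof of scattering in both time directions.
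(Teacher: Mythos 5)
Your overall architecture (sign rigidity at threshold from non-attainment of $m^{(1)}$, Payne--Sattinger on the negative side, profile decomposition plus Duyckaerts--Merle on the positive side for $d\ge 3$) matches the paper's, but two steps as you state them do not close. First, the blowup side: your claimed uniform bound $-K_0(u(t))\ge\de_0>0$ on $\K^-$ at the threshold is not available and is in fact false. The variational estimate that survives at the threshold (the paper's \eqref{bd K}, inherited from the sub-threshold sets for the mass-$c$ functional via $\K^-\subset\K^{-(c)}$) is $K_0^{(c)}(u)\le-\tfrac{\br\mu}{2}M(t)$ with $M(t)=\|\dot u\|_2^2+(1-c)\|u\|_2^2$, which only gives $-K_0(u)\ge\|\dot u\|_2^2$ and degenerates as $M(t)\to0$; near-minimizing, concentrating configurations with $K_0<0$, $J\to m$ and $\|u\|_2\to 0$ show no uniform gap exists (e.g.\ rescalings of $W$ for $d\ge3$). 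Consequently the ``standard'' convexity argument does not start: at $E(u)=m$ the usual strictly positive constant $2p(m-E(u))$ in $\ddot y$ vanishes, and what remains is only $\ddot y\ge(1+\e/4)\dot y^2/y+(1-c)\e y$. The paper closes this precisely by exploiting the mass-shift term $(1-c)\e y$: setting $z=y^{-\e/4}$ one gets $\ddot z\le-(1-c)\e^2 z/4$ and a Sturm--Liouville oscillation contradiction for a global solution. Without the mass-shift contribution (i.e.\ without $c<1$) this step fails, so you cannot bypass it by quoting a generic threshold adaptation.

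Second, in two dimensions your plan to ``run the 2D Kenig--Merle/Bahouri--G\'erard machinery'' after noting $K_2(u)\gec\|\na u\|_2^2$ is exactly the route the paper argues is unavailable: the profile decomposition and long-time perturbation require Strichartz control of the exponential nonlinearity, which breaks down when the kinetic energy concentrates toward the critical Trudinger--Moser level, and an \emph{upper} bound $\|\na u\|_2^2\le 2m$ (which in any case comes from $K_\I(u)\ge0$, not from the $K_2$ coercivity) does not preclude such concentration. The nondegenerate bound $K_2(\fy)\ge(1-c)\|\na\fy\|_2^2$ (Lemma \ref{K2 unif bd}) is instead used inside a direct, purely variational argument: a case split on whether the time-averaged kinetic energy vanishes (leading to small-data scattering) or not, and in the latter case a virial identity localized to light cones around the concentration points, with the Lorentz transform reducing to zero momentum; no minimal element is extracted. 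Finally, a smaller point for $d\ge3$: $c=0$ there, so the oscillation that rules out convergence to $Q$ is produced by the full mass term of NLKG through the concavity inequality $\ddot y\le-2(1-c)y$ when $M(t)\to0$, yielding the lower bound $\|\dot u(t_n)\|_2^2\ge\de_0$ that excludes $Q$; $W_\pm$ are then excluded simply because they scatter in one time direction, forcing a finite Strichartz norm for $u$ on that side. Your ``modulation analysis against exponential convergence'' is not needed and, as sketched, is not a proof.
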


\subsection{Notation and some preliminary estimates} 
For any $a\in[c,1)$, let 
\EQ{
 \pt \K_{\al,\be}^{+(a)}:=\{(u,\dot u)\in H^1\times L^2 \mid E^{(a)}(u)<m,\ K_{\al,\be}^{(a)}(u)\ge 0\},
 \pr \K_{\al,\be}^{-(a)}:=\{(u,\dot u)\in H^1\times L^2 \mid E^{(a)}(u)<m,\ K_{\al,\be}^{(a)}(u)< 0\},}
where $u$ and $\dot u$ are just arbitrary functions respectively from $H^1$ and $L^2$ satisfying the above conditions (there is no actual time here). 
It was shown in \cite{ScatBlow} that both sets are open and connected (note that $m=m^{(a)}$), and independent of $(\al,\be)$ in the range \eqref{range albe}. So we will omit the subscript $\al,\be$. Since $E^{(a)}$ is increasing for $a$, we deduce that $\K^{+(a)} \subset \K^{+(c)}$, while  $\K^{-(a)} \subset \K^{-(c)}$ is trivial. Taking the limit $a\to 1-0$, we conclude 
\EQ{
 \pt \K^+:=\{(u,\dot u)\in H^1\times L^2 \mid E(u)\le m,\ K_{\al,\be}(u)\ge 0\} \subset \K^{+(c)},
 \pr \K^-:=\{(u,\dot u)\in H^1\times L^2 \mid E(u)\le m,\ K_{\al,\be}(u)< 0\} \subset \K^{-(c)},}
which are also independent of $(\al,\be)$. Let 
\EQ{
 M(t) := \|\dot u\|_2^2+(1-c)\|u\|_2^2.}
Then $E(u)=m$ implies that 
\EQ{
  J^{(c)}(u(t))=m-M(t)/2.}
Lemma 2.12 in \cite{ScatBlow} with the mass $c$ implies that for $(\al,\be)$ in \eqref{range albe} and $(d,\al)\not=(2,0)$, there exists $\de=\de_{\al,\be}>0$ such that 
\EQ{ \label{bd K}
 \pt (u,\dot u)\in \K^{+(c)} \implies K_{\al,\be}^{(c)}(u) \ge \min(\de K^{(c)Q}_{\al,\be}(u),\frac{\br\mu}{2}M(t)), 
 \pr (u,\dot u)\in \K^{-(c)} \implies K_{\al,\be}^{(c)}(u) \le -\frac{\br\mu}{2}M(t),}
where $K_{\al,\be}^{(c)Q}$ denotes the free version (without the nonlinear term) of $K_{\al,\be}^{(c)}$, and 
\EQ{
 \br\mu:=2\al+\max(\be d,\be(d-2)).}
Both in $\K^+$ and in $\K^-$, the key identity of our argument will be 
\EQ{ \label{eq times u}
 \ddot y/2 = \|\dot u\|_2^2 - K_0(u),}
where $y(t):=\|u\|_2^2$. The energy density is denoted by 
\EQ{
 e_F^{(c)}:=\frac{|\dot u|^2+|\na u|^2+c|u|^2}{2}, \pq e_N^{(c)}:=e_F^{(c)}-f(u).}

For any space-time function $u(t,x)$, we use the vector notation 
\EQ{
 \vec u:=\LR{\na}u-i\dot u,\pq \LR{\na}:=\sqrt{1-\De},}
then $\|\vec u\|_2^2=\int 2e_Fdx$. 
Since the energy is not sign definite, the finite propagation speed is not obvious from the energy conservation, but we have
\begin{lemma}[propagation of exterior smallness] \label{ext small}
Let $u$ be a solution of NLKG \eqref{NLKG} around $t=T$ satisfying for some $R>0$, $\al\in\R^d$ and $\e>0$, 
\EQ{
 \int_{|x-\al|>R}e_F(T)dx<\e.}
There is a constant $\e_0>0$ determined by the equation, such that if $\e\le\e_0$ then $u$ can be extended to the exterior cone $|x-\al|>R+|t-T|$, and for all $t\in\R$, 
\EQ{
 \int_{|x-\al|>R+|t-T|}e_F(t)dx \lec \e.}
\end{lemma}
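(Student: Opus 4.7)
The plan is to combine a local energy identity for NLKG with small-data global well-posedness, applied via a cutoff of the initial data and finite propagation. Multiplying \eqref{NLKG} by $\dot u$ gives the spacetime divergence identity
\[
\p_t e_N - \na\cdot(\dot u\,\na u) = 0, \qquad e_N = e_F - f(u).
\]
Integrating over the truncated exterior cone $\Om(t):=\{(s,x):T\le s\le t,\ |x-\al|\ge R+(s-T)\}$ (the case $t<T$ being symmetric) and applying the spacetime divergence theorem yields the flux identity
\[
\int_{|x-\al|>R+(t-T)}\!\! e_N(t)\,dx = \int_{|x-\al|>R}\!\! e_N(T)\,dx - \frac{1}{\sqrt 2}\int_\Sigma \bigl(e_N + \om\cdot\na u\,\dot u\bigr)\,d\si,
\]
where $\Sigma$ is the lateral boundary and $\om=(x-\al)/|x-\al|$. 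Completing the square in $\dot u$ and the radial part of $\na u$,
\[
e_N + \om\cdot\na u\,\dot u = \tfrac12|\dot u + \om\cdot\na u|^2 + \tfrac12\bigl(|\na u|^2 - |\om\cdot\na u|^2\bigr) + \tfrac12|u|^2 - f(u),
\]
so the lateral integrand is bounded below by $-f(u)$. Applied to the difference of two energy-space solutions of \eqref{NLKG} sharing the same data on $|x-\al|>R$ at time $T$ (the differenced nonlinearity being quadratically small in $u_1-u_2$), this yields the standard finite-propagation-speed principle: two such solutions agree on the exterior cone.

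For existence, I would apply small-data global well-posedness to a truncation of $(u(T),\dot u(T))$. Fix small $\ro>0$ and let $\chi$ be a smooth radial cutoff supported in $|x-\al|>R$ with $\chi\equiv 1$ on $|x-\al|>R+\ro$ and $|\na\chi|\lec 1/\ro$. The data $(\chi u(T),\chi \dot u(T))$ is supported where the exterior hypothesis applies, so its $H^1\times L^2$-norm is bounded by a $\ro$-dependent constant times $\e^{1/2}$. Taking $\e_0$ small in terms of $\ro$, the nonlinear part of its energy is subordinate to the quadratic part: by Sobolev in $d\ge 3$ where $f(u)\sim|u|^{2^\star}$, and by \eqref{TM cond} together with the Moser--Trudinger inequality in $d=2$, the latter being legitimate since $\ka_0\|\na(\chi u(T))\|_2^2<4\pi$ for $\e_0$ small enough. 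Small-data global well-posedness in the energy space (\cite{GV} for $d\ge 3$ and \cite{IMM2} for $d=2$) then produces a global solution $\ti u$ with $\|(\ti u(t),\dot{\ti u}(t))\|_{H^1\times L^2}\lec\e^{1/2}$ uniformly in $t$. By the finite-propagation principle, $u\equiv\ti u$ on $|x-\al|>R+\ro+|t-T|$, yielding existence of $u$ there and the bound $\int_{|x-\al|>R+\ro+|t-T|}e_F(t)\,dx\lec\e$.

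The main technical obstacle is the removal of the buffer $\ro$. One resolution is to observe that the hypothesis $\int_{|x-\al|>R}e_F(T)\,dx<\e$ also gives $\int_{|x-\al|>R'}e_F(T)\,dx<\e$ for every $R'\in(R,R+\ro)$; applying the construction at radius $R'$ gives the conclusion on $|x-\al|>R'+\ro+|t-T|$, and letting $R'\downarrow R$ recovers $|x-\al|>R+\ro+|t-T|$. An alternative, less cutoff-dependent, route is to bootstrap directly on $\int_{|x-\al|>R+(t-T)}e_F(t)\,dx$ using the flux identity above and a Sobolev-trace bound on the lateral integral of $|f(u)|$, which eliminates the buffer but requires a more delicate nonlinear estimate on the codimension-one surface $\Sigma$. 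In the 2D exponential case there is the additional check that the Moser--Trudinger threshold is not violated throughout the argument; this is automatic once $\e_0$ is chosen small enough in comparison with $4\pi/\ka_0$.
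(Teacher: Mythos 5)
Your overall strategy is the same as the paper's: manufacture an auxiliary global solution with uniformly small energy that agrees with $u$ outside the ball $|x-\al|>R$ at time $T$, then transfer the smallness to $u$ on the exterior cone by finite propagation speed. The paper's proof is exactly this, in three lines. However, there is one genuine defect in your implementation. By multiplying the data by a cutoff $\chi$ supported in $|x-\al|>R$ with $\chi\equiv 1$ only on $|x-\al|>R+\ro$, your auxiliary solution agrees with $u$ only where the data coincide, i.e.\ on $|x-\al|>R+\ro$, and finite propagation then gives the conclusion only on the strictly smaller cone $|x-\al|>R+\ro+|t-T|$. Your proposed repair does not work: sending $R'\downarrow R$ with $\ro$ fixed returns the same buffered cone $|x-\al|>R+\ro+|t-T|$, so nothing is gained; and you cannot instead send $\ro\to 0$, because $|\na\chi|\lec 1/\ro$ forces $\e_0=\e_0(\ro)\to 0$, so for a fixed admissible $\e$ the buffer cannot be removed. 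Since the lemma is applied in the paper with $R=|t_n|\to 0$ to conclude that $\supp u$ lies exactly in a backward light cone, a fixed buffer is not harmless there.

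The fix — and the paper's actual device — is to replace the multiplicative cutoff by a bounded Sobolev \emph{extension} from the exterior domain: choose $\fy\in H^1(\R^d)$ and $\psi\in L^2(\R^d)$ with $\fy=u(T)$ and $\psi=\dot u(T)$ on \emph{all} of $|x-\al|>R$ and $\|\fy\|_{H^1}^2+\|\psi\|_2^2\lec\int_{|x-\al|>R}e_F(T)\,dx<\e$. The resulting small-energy global solution $v$ then has data identical to $u$'s on the full exterior $|x-\al|>R$, so finite propagation gives $v=u$ on the exact cone $|x-\al|>R+|t-T|$, and $\int 2e_F(v)\,dx=\|\vec v\|_2^2\lec\e$ for all $t$ finishes the proof. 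Two further remarks. First, your flux identity is not needed once the auxiliary solution exists, and it cannot replace it: as you note, the lateral integrand is only bounded below by $-f(u)$, which has the wrong sign in the focusing case, so the ``direct bootstrap'' alternative you sketch would require genuinely new estimates on the lateral boundary; the uniqueness statement (finite speed for the difference of two solutions) is better obtained from the local well-posedness theory, which is all the paper invokes. Second, your observation that global existence of the auxiliary solution follows from the a priori smallness of $E\sim E_F$ (not from scattering) is correct and matches the paper.
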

This lemma holds as long as NLKG is locally wellposed in the energy space, the energy is conserved, and $f(u)$ is superquadratic. 
\begin{proof}
Using the extension from $|x-\al|>R$ into the inside, we can find $\fy\in H^1$ and $\psi\in L^2$ such that $\fy(x)=u(T,x)$ and $\psi(x)=\dot u(T,x)$ for $|x-\al|>R$ and 
\(
 \|\fy\|_{H^1}^2+\|\psi\|_2^2\lec \e. \)
If $\e>0$ is small enough, then there is a global solution $v$ with $(v(T),\dot v(T))=(\fy,\psi)$ satisfying $E(v)\sim E_F(v)\lec\e$ for all $t\in\R$. Here the global existence comes from the a priori small bound on the energy, not by the scattering. By the finite propagation property of the linear equation, $v(t,x)=u(t,x)$ for $|x-\al|>R+|t-T|$, which implies the conclusion. 
\end{proof}

\section{Blow-up in $\K^-$} 
First we choose $p=2+\e\in(2,4)$ so that 
\EQ{
 (D-4/(2-\e))f \ge 0, \pq (D-2)^2 f \ge 0,}
which means that $f$ is slightly super-quadratic, and they are weaker conditions than \eqref{f 2D-1}. Define a new functional $H^{(c)}_p$ by 
\EQ{
 H^{(c)}_p(\fy) = J(\fy)-K_0(\fy)/p.}
Then for any $\fy\in H^1$ satisfying $K^{(c)}_0(\fy)< 0$, we have 
\EQ{
 m=J^{(c)}(Q)=H_p^{(c)}(Q) \le H_p^{(c)}(\fy).}
\begin{proof}
Consider the scaling family $\fy^\la:=\fy^\la_{1,0}$. Then 
\EQ{
 \pt\p_\la H_p^{(c)}(\fy^\la) = (\cL_{1,0}-\frac{1}{p}\cL_{1,0}^2)J^{(c)}(\fy^\la)
 \pr= \frac{\e}{4}K_0^{(c)Q}(\fy^\la) + \frac{4-p}{p}(D-\frac{4}{4-p})F(\fy^\la)+\frac{1}{p}(D-2)^2F(\fy^\la) \ge 0,}
which can be checked directly or by using 
\EQ{
 (\cL_{1,0}-2)^2 J = -(\cL_{1,0}-2)^2 F,}
noting that $(\al,\be)=(1,0)$ implies that $\br\mu=2=\U\mu:=2\al+\min(\be d,\be(d-2))$. 
Since $K^{(c)}_0(\fy)<0$, there exists $\la<0$ such that $K^{(c)}_0(\fy^\la)=0$. Since $Q$ is a minimizer of $J^{(c)}$ over $K^{(c)}=0$, and $H_p^{(c)}(\fy^\la)$ is non-decreasing in $\la$, we get 
\EQ{
 H_p^{(c)}(Q)=J^{(c)}(Q) \le J^{(c)}(\fy^\la) = H_p^{(c)}(\fy^\la) \le H_p^{(c)}(\fy).}
\end{proof}

Now assume by contradiction that the solution $u$ extends to $t\to\I$. 
We can rewrite \eqref{eq times u} by using the fact that $E(u)=m=H_p^{(c)}(Q)$, 
\EQ{ \label{DI y}
 \ddot y \pt= (2+p) \|\dot u\|_{L^2}^2 + 2p(H^{(1)}_p(u)-m)
 \pr= (4+\e)\|\dot u\|_{L^2}^2 + (1-c)\e\|u\|_{L^2}^2 + 2p(H^{(c)}_p(u)-H^{(c)}_p(Q))
 \pr\ge (1+\e/4)\dot y^2/y + (1-c)\e y.}
Let $z:=y^{-\e/4}$. Using the last inequality, we get 
\EQ{ \label{DI z}
 \ddot z = -\e z[\ddot y-(1+\e/4)\dot y^2/y]/(4y) \le -(1-c)\e^2z/4.}
Then by Sturm-Liouville, $z(t)$ has to cross $0$ within any time interval longer than $2\pi/(\e\sqrt{1-c})$, a contradiction. Hence $u$ cannot be a global solution. 

\section{Scattering in $\K^+$ in two dimensions}
\subsection{Global wellposedness in 2D}
Under the assumption \eqref{bound on Df}, we can use the Trudinger-Moser inequality on the disk: For any $\fy\in H^1(\R^2)$, 
\EQ{ \label{TMD}
 \supp\fy\subset\{|x|<R\},\ \|\na\fy\|_2^2\le 2m \implies \int_{|x|<R}e^{\ka_0|\fy|^2}dx \lec R^2.}
Suppose that the solution $u$ in $\K^+$ is defined for $-T<t<0$ with $E(u)=m$, blowing up as $t\to-0$. Since 
\EQ{ \label{deri bd}
 \|\na u(t)\|_2^2+\|\dot u(t)\|_2^2=2[E(u)-K_\I(u(t))] \le 2E(u)=2m,}
$u$ stays in the (sub)critical region $\|\na u(t)\|_2^2\le 2m$, and so it can blow up only if 
\EQ{
 \limsup_{t\to-0}\|\na u(t)\|_2^2=2m.}
Using the finite propagation together with the uniform local wellposedness in the subcritical range $\|\na u(t)\|_2<2m$ (see \cite{IMM1}), we deduce that for some $x^*\in\R^2$  
\EQ{
 \limsup_{t\to-0}\int_{|x-x^*|<|t|}|\na u|^2dx = 2m,}
which, together with \eqref{deri bd}, implies 
\EQ{
 \liminf_{t\to-0}\int_{|x-x^*|>|t|}[|\na u|^2+|\dot u|^2]dx=0.}
Since $\dot u(t)$ is bounded in $L^2_x$, $u(t)$ converges strongly in $L^2_x$, while the above identities imply that $\na u(t)\to 0$ in $\cS'_x$. Hence $\|u(t)\|_2\to 0$. Thus we get $t_n\nearrow 0$ such that 
\EQ{
 \int_{|x-x^*|>|t_n|}e_F(t_n)dx \to 0.}
Applying Lemma \ref{ext small} to each $t_n$, we deduce that 
\EQ{
 \supp u(t,x) \subset\{|x-x^*|\le -t\}.}

So the Trudinger-Moser \eqref{TMD} on the disk $|x|<|t|$ implies 
\EQ{
 G_0(u)+F(u)+\|u\|_2^2\lec t^2,}
and since $\|\na u\|_2^2/2\to E(u)$ as $t\to-0$, we have also 
$\|\dot u\|_2^2 \to 0$. 
But these are contradicting the energy equipartition.  Indeed, 
\EQ{
 \p_t\LR{u|\dot u}=\|\dot u\|_2^2-\|\na u\|_2^2-\|u\|_2^2-G_0(u)\to-2m<0,}
cannot hold with $|\LR{u|\dot u}|\le\|u\|_2\|\dot u\|_2=o(t)$ as $t\to-0$. 

\subsection{Scattering in two dimensions}
We use a uniform variational estimate: 
\begin{lemma} \label{K2 unif bd}
In the case of mass shift $c\in(0,1)$, for any function $\fy\in H^1(\R^2)$ with $J(\fy)\le m$ and $K_2(\fy)\ge 0$ we have 
\EQ{
 K_2(\fy)\ge (1-c)\|\na\fy\|_2^2.}
\end{lemma}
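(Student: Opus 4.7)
The plan is to apply Lemma~2.12 of \cite{ScatBlow} (cited above as \eqref{bd K}), exploiting the strict gap provided by the mass shift $c<1$. The statement is trivial for $\fy=0$, so I assume $\fy\ne 0$ throughout.

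First I would verify that $(\fy,0)\in\K^{+(c)}$. Since $\|\fy\|_2>0$ and $c<1$,
\[
 J^{(c)}(\fy)=J(\fy)-(1-c)\|\fy\|_2^2/2\le m-(1-c)\|\fy\|_2^2/2<m,
\]
so $E^{(c)}(\fy,0)=J^{(c)}(\fy)<m$ and $K_2^{(c)}(\fy)=K_2(\fy)\ge 0$. Applying \eqref{bd K} with $(\al,\be)=(d,-2)=(2,-2)$ (valid since $(d,\al)=(2,2)\ne(2,0)$), and using $K_{2,-2}^{(c)Q}(\fy)=2\|\na\fy\|_2^2$, $\br\mu=4$, $M=(1-c)\|\fy\|_2^2$, yields
\[
 K_2(\fy)\ge \min\bigl(2\de\|\na\fy\|_2^2,\ 2(1-c)\|\fy\|_2^2\bigr)
\]
for some $\de=\de_{2,-2}>0$. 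In the regime $2\|\fy\|_2^2\ge\|\na\fy\|_2^2$ the second argument of the $\min$ already dominates $(1-c)\|\na\fy\|_2^2$, so the conclusion holds immediately. The delicate regime is $2\|\fy\|_2^2<\|\na\fy\|_2^2$, where this crude bound falls short.

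To close this regime I would use the $L^2$-preserving scaling $\fy_\mu(x):=\mu\fy(\mu x)$. Under \eqref{f 2D-1} one has $(D-2)(D-4)f=(D-2)(D-p_0)f+(p_0-4)(D-2)f\ge 0$ (using $f\ge 0$), which implies
\[
 \p_\mu\bigl[K_2(\fy_\mu)/\mu^2\bigr]=-2\mu^{-5}\!\int(D-2)(D-4)f(\mu\fy)\,dy\le 0,
\]
so $K_2(\fy_\mu)/\mu^2$ is non-increasing. Combined with $K_2(\fy)>0$ and $K_2(\fy_\mu)\to-\I$ as $\mu\to\I$, there is a unique $\mu_0>1$ with $K_2(\fy_{\mu_0})=0$. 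The variational characterization of $m$ together with $L^2$-invariance of the scaling then gives
\[
 J(\fy_{\mu_0})=J^{(c)}(\fy_{\mu_0})+(1-c)\|\fy\|_2^2/2\ge m+(1-c)\|\fy\|_2^2/2.
\]
Integrating the fundamental identity $\p_\mu J(\fy_\mu)=K_2(\fy_\mu)/(2\mu)$ from $1$ to $\mu_0$ and invoking $J(\fy)\le m$ yields
\[
 \int_1^{\mu_0}\frac{K_2(\fy_\mu)}{2\mu}\,d\mu \ge \frac{(1-c)\|\fy\|_2^2}{2}.
\]

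The hard part is converting this integral inequality into the pointwise bound $K_2(\fy)\ge(1-c)\|\na\fy\|_2^2$. For this I would use the quantitative form of the monotonicity: from $(D-2)(D-4)f\ge(p_0-4)(D-2)f$ and the identity $\int(D-2)f(\mu\fy)dy=\mu^4\|\na\fy\|_2^2-\mu^2 K_2(\fy_\mu)/2$, setting $v(\mu):=2\|\na\fy\|_2^2-K_2(\fy_\mu)/\mu^2\ge 0$ one derives the differential inequality $\p_\mu\log v\ge(p_0-4)/\mu$, which integrates to
\[
 \mu_0^{p_0-4}\le\frac{2\|\na\fy\|_2^2}{2\|\na\fy\|_2^2-K_2(\fy)}.
\]
Combining this with the upper bound $\int_1^{\mu_0}K_2(\fy_\mu)/(2\mu)d\mu\le K_2(\fy)(\mu_0^2-1)/4$ (also from the monotonicity) and the lower bound above closes the argument, with the balancing dictated by the constant $(1-c)$ from the mass-shift gap.

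The main obstacle is precisely this final balancing step: the naive application of \eqref{bd K} is insufficient in the regime $\|\na\fy\|_2^2>2\|\fy\|_2^2$, and one must combine the variational level gap $(1-c)\|\fy\|_2^2/2$ with the explicit rate $\mu_0^{p_0-4}\le 2a/(2a-K_2)$ to extract the sharp constant $(1-c)$ in front of $\|\na\fy\|_2^2$.
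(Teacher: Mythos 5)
Your proposal has a genuine gap at exactly the step you flag as ``the hard part,'' and as sketched it does not close. Write $a:=\|\na\fy\|_2^2$, $b:=\|\fy\|_2^2$, $K_2(\fy)=\e a$. Your two ingredients are (i) $K_2(\fy)(\mu_0^2-1)/4\ge(1-c)b/2$ and (ii) $\mu_0^{p_0-4}\le 2a/(2a-K_2(\fy))=2/(2-\e)$. But (ii) forces $\mu_0^2-1\lesssim \e/(p_0-4)$, and substituting into (i) gives $\e^2\gtrsim(1-c)(p_0-4)\,b/a$, i.e.\ $K_2(\fy)\gtrsim\sqrt{1-c}\,\|\fy\|_2\|\na\fy\|_2$. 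This is strictly weaker than the target $(1-c)\|\na\fy\|_2^2$ precisely in the regime $\|\fy\|_2\ll\|\na\fy\|_2$ that you identified as delicate (and that regime is not empty under the hypotheses $J\le m$, $K_2\ge 0$: concentrate $\fy$ spatially). The root cause is structural: your level-gap $J(\fy_{\mu_0})-J(\fy)\ge(1-c)\|\fy\|_2^2/2$ measures the mass-shift gain in units of $\|\fy\|_2^2$, and nothing in the argument converts that into units of $\|\na\fy\|_2^2$. A secondary issue: in your ``easy regime'' you conclude from $K_2(\fy)\ge\min(2\de\|\na\fy\|_2^2,\,2(1-c)\|\fy\|_2^2)$ that the bound holds because the \emph{second} argument dominates $(1-c)\|\na\fy\|_2^2$; but a lower bound by a minimum requires \emph{both} arguments to dominate, and $\de=\de_{2,-2}$ from \eqref{bd K} is an unspecified constant with no reason to satisfy $2\de\ge 1-c$.

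The paper's proof avoids both problems by using the other scaling: the pure spatial dilation $\fy_R(x)=\fy(x/R)$, which in $d=2$ \emph{fixes} $\|\na\fy\|_2^2$ while multiplying $\|\fy\|_2^2$, $F$, and $G_2$ by $R^2$. One checks, at the intermediate mass $\ti c=(1+c)/2$, that $J^{(\ti c)}(\fy_R)$ stays below $m$ up to a definite dilation $R_*^2\ge 2/(1+c)$ (here the gap $(1-c)\|\fy\|_2^2/4$ is compared against $K_\I^{(\ti c)}(\fy)\le(1+c)\|\fy\|_2^2/4$, so the $\|\fy\|_2^2$ factors cancel). Connectedness of $\K^{\pm(\ti c)}$ then forces $K_2(\fy_{R_*})\ge 0$, i.e.\ $G_2(\fy)\le R_*^{-2}\|\na\fy\|_2^2$, and the conclusion $K_2(\fy)\ge 2(1-R_*^{-2})\|\na\fy\|_2^2\ge(1-c)\|\na\fy\|_2^2$ falls out with the gain attached directly to $\|\na\fy\|_2^2$. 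If you want to salvage your approach, you would need to replace the $L^2$-preserving scaling by this $\dot H^1$-preserving one (or otherwise show that $\|\fy\|_2\gec\|\na\fy\|_2$ whenever $K_2(\fy)<(1-c)\|\na\fy\|_2^2$, which you have not done).
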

\begin{proof}
We may assume $\fy\not=0$. Let $\ti c:=(1+c)/2$, then 
\EQ{
 J^{(\ti c)}(\fy)=\|\na\fy\|_2^2/2-K_\I^{(\ti c)}(\fy)\le m-(1-c)\|\fy\|_2^2/4.}
For any $R>1$, let $\fy_R:=\fy(x/R)$. Then 
\EQ{
 J^{(\ti c)}(\fy_R)=\|\na\fy\|_2^2/2-R^2K_\I^{(\ti c)}(\fy)<m}
for any $R\in[1,R_*)$, where $R_*=R_*(\fy)$ is determined by 
\EQ{
  (R_*^2-1)K_\I^{(\ti c)}(\fy)=(1-c)\|\fy\|_2^2/4.}
Substituting 
\( 
 K_\I^{(\ti c)}(\fy)\le\ti c\|\fy\|_2^2/2=(1+c)\|\fy\|_2^2/4,
\)
we obtain 
\EQ{
 R_*^2 \ge 2/(1+c).}
By continuity in $R$, $\fy_R$ stays in $\K^+$ for $1\le R\le R_*$, so 
\EQ{
 0\le K_2(\fy_{R_*})=2\|\na \fy\|_2^2-2R_*^2G_2(\fy).}
Therefore 
\EQ{ 
 K_2(\fy)=2\|\na \fy\|_2^2-2G_2(\fy) \ge 2(1-R_*^{-2})\|\na \fy\|_2^2 \ge (1-c)\|\na\fy\|_2^2.}
\end{proof}

\subsubsection{Case 1: Vanishing kinetic energy} Suppose that for any $L\ge 2$ we have
\EQ{ \label{vanish mean kine}
 \liminf_{T\to\I}\int_{T}^{T+L}\|\na u(t)\|_2^2dt = 0.}
\begin{lemma}
If $u$ is a global solution in $\K^+$ with $E(u)\le m$ and \eqref{vanish mean kine}, then for each $L\ge 2$ there exists a sequence $T_n\nearrow\I$ such that 
\EQ{
 \pt \int_{T_n}^{T_n+L}\|\na u(t)\|_2^2dt\to 0,
 \pq \int_{T_n}^{T_n+L}G_0(u)+F(u)dt\to 0,
 \pr \int_{T_n}^{T_n+L}\|\dot u(t)\|_2^2dt \to LE(u), 
 \pq \int_{T_n}^{T_n+L}\|u(t)\|_2^2dt \to LE(u).}
\end{lemma}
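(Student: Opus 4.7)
Using \eqref{vanish mean kine} with $L':=L+\pi$ in place of $L$, pick $T^{(n)}\to\I$ with $\e_n:=\int_{T^{(n)}}^{T^{(n)}+L'}\|\na u\|_2^2\,dt\to 0$, with the plan to set $T_n=T^{(n)}+\tau_n$ for a suitable $\tau_n\in[0,\pi]$. The first task is the integral vanishing
\[
 \int_{T^{(n)}}^{T^{(n)}+L'}[F(u)+G_0(u)]\,dt\to 0.
\]
This reduces to a pointwise continuity: if $\|\na\fy_k\|_2\to 0$ while $\|\fy_k\|_2$ stays bounded, then $F(\fy_k),G_0(\fy_k)\to 0$. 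To see this, split $f(u)=f(u)\chi_{|u|\le R}+f(u)\chi_{|u|>R}$. The inner piece is bounded by $C_R\|u\|_p^p$ (for $p>4$ as in \eqref{f 2D-2}), which tends to $0$ by Gagliardo-Nirenberg. For the outer piece, $|u|^2 f(u)\lec e^{\ka_0|u|^2}$ from \eqref{TM cond} together with the Trudinger-Moser inequality (applicable since $\ka_0\|\na u\|_2^2<4\pi$ when $\|\na u\|_2$ is small) yield $\int|u|^2 f(u)\,dx\lec 1$, so $\int_{|u|>R}f(u)\,dx\lec R^{-2}$. Sending $\|\na\fy_k\|_2\to 0$ and then $R\to\I$ gives the claim; \eqref{bound on Df} handles $G_0$ analogously. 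Combining this with the crude uniform bound $F(u),G_0(u)\lec 1$ on $\K^+$ (from $K_\I^{(c)}(u)\ge 0$ and $K_2(u)\ge 0$) and Chebyshev applied to $\{t:\|\na u(t)\|_2\ge\de\}$ (whose measure is $\le\e_n/\de^2$), sending $n\to\I$ and then $\de\to 0$ yields the integral vanishing.

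\emph{Linearised virial and selection of $\tau_n$.} Set $y(t):=\|u\|_2^2$. Substituting the energy identity $\|\dot u\|_2^2+\|u\|_2^2=2E(u)-\|\na u\|_2^2+2F(u)$ into \eqref{eq times u} yields
\[
 \ddot y+4y=4E(u)+h(t),\pq h:=4F(u)-4\|\na u\|_2^2+2G_0(u),
\]
with $\int_{T^{(n)}}^{T^{(n)}+L'}|h|\,dt\to 0$ by the first step. Since $y$ and $\dot y$ are a priori bounded on $\R$, Duhamel for the harmonic oscillator in $z:=y-E(u)$ gives
\[
 \dot y(T^{(n)}+\tau)=R_n\cos(2\tau+\psi_n)+o(1)
\]
uniformly in $\tau\in[0,L']$, for some $R_n,\psi_n$ bounded in $n$. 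A sum-to-product identity then gives
\[
 \dot y(T^{(n)}+\tau+L)-\dot y(T^{(n)}+\tau)=-2R_n\sin L\sin(2\tau+L+\psi_n)+o(1).
\]
If $\sin L=0$ any $\tau$ works; otherwise the zeros in $\tau$ of $\sin(2\tau+L+\psi_n)$ are spaced by $\pi/2$, so one picks $\tau_n\in[0,\pi]$ making the expression $o(1)$. Set $T_n:=T^{(n)}+\tau_n$ (passing to a monotone subsequence so that $T_n\nearrow\I$).

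\emph{Conclusion.} Since $[T_n,T_n+L]\subset[T^{(n)},T^{(n)}+L']$, the integrals of $\|\na u\|_2^2$, $F(u)$, $G_0(u)$ over $[T_n,T_n+L]$ all tend to $0$. Integrating $\ddot y/2=\|\dot u\|_2^2-\|\na u\|_2^2-\|u\|_2^2+G_0(u)$ over $[T_n,T_n+L]$ and using $\dot y(T_n+L)-\dot y(T_n)\to 0$ gives $\int[\|\dot u\|_2^2-\|u\|_2^2]\,dt\to 0$. Combined with $\int[\|\dot u\|_2^2+\|u\|_2^2]\,dt\to 2LE(u)$ from energy conservation, this yields $\int\|\dot u\|_2^2\,dt\to LE(u)$ and $\int\|u\|_2^2\,dt\to LE(u)$. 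The delicate step is the initial integral vanishing of $F$ and $G_0$: because the 2D nonlinearity grows exponentially, smallness of $\|\na u\|_2$ does not immediately entail smallness of $F(u)$; one must combine Gagliardo-Nirenberg (for the near-zero part of $u$) with the Trudinger-Moser-driven $R^{-2}$ tail bound to get pointwise smallness, and then promote it to an integrated statement via Chebyshev.
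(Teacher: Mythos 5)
Your proof is correct, but it takes a genuinely different route from the paper's on both halves. For the vanishing of $\int[F(u)+G_0(u)]\,dt$, the paper does not need Trudinger--Moser, Gagliardo--Nirenberg, or a Chebyshev splitting: it simply observes that $K_2(u)\ge 0$ means $(D-2)F(u)\le\|\na u\|_2^2$ pointwise in $t$, while \eqref{f 2D-1} gives $(D-2)f\ge(p-2)f$ with $p>4$, so $F(u)+G_0(u)\lec(D-2)F(u)\le\|\na u\|_2^2$ and the claim follows by integrating \eqref{vanish mean kine} directly. Your exceptional-set bound already invokes exactly this ingredient ($K_2\ge 0$) to get $F,G_0\lec 1$, so you were one step away from the one-line argument; your detour is sound (the subcritical whole-plane Trudinger--Moser applies since $\ka_0\|\na u\|_2^2\ll 4\pi$ on the good set) but buys nothing here. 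For the equipartition, the paper applies \eqref{vanish mean kine} with window length $nL$ and uses a mean-value/pigeonhole argument on $\LR{\dot u|u}$ to find a length-$L$ subinterval on which the boundary term $[\LR{\dot u|u}]_{T_n}^{T_n+L}$ is $O(1/n)$; you instead exploit the forced-oscillator structure $\ddot y+4y=4E(u)+h$ with $\int|h|\to 0$ and select the phase $\tau_n\in[0,\pi]$ explicitly so that $\dot y(T_n+L)-\dot y(T_n)=o(1)$. Both are valid: yours only needs a window of length $L+\pi$ and is constructive, but leans on the exact mass-$1$ frequency; the paper's is softer and would survive perturbations of the linear part. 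The final bookkeeping (combining $\int[\|\dot u\|_2^2-\|u\|_2^2]\to 0$ with energy conservation) is identical in both.
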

\begin{proof}
The first two are obvious from \eqref{vanish mean kine}, $K_2(u(t))\ge 0$ and $(D-2)f\gec f$. The latter two are essentially the energy equipartition, which follows from 
\EQ{
 \p_t\LR{\dot u|u}=\|\dot u\|_2^2-K_0(u)=\|\dot u\|_2^2-\|u\|_2^2+2F(u)-K_2(u)/2.}By \eqref{vanish mean kine}, we can find $S_n\nearrow\I$ such that 
\EQ{
 \int_{S_n}^{S_n+nL}[\|\na u\|_2^2+G_0(u)+F(u)]dt \to 0,}
and $T_n\in(S_n,S_n+nL)$ such that 
\EQ{
 |[\LR{\dot u|u}]_{T_n}^{T_n+L}| \lec E(u)/n \to 0,}
and hence 
\EQ{
 \int_{T_n}^{T_n+L}[\|\dot u\|_2^2-\|u\|_2^2]dt \to 0, \pq \int_{T_n}^{T_n+L}[\|\dot u\|_2^2+\|u\|_2^2]dt \to 2LE(u).}
\end{proof}

For each $n\in\N$, let $I_n:=(T_n,T_n+L)$ and let 
\EQ{
 A_n:=\{t\in I_n \mid 2\|u\|_2^2\le E(u)\},}
then we have, as $n\to\I$, 
\EQ{
 2|A_n|E(u)\pt=\int_{A_n}[2\|u\|_2^2+\|\dot u\|_2^2-\|u\|_2^2+\|\na u\|_2^2-2F(u)]dt
 \pr\le |A_n|E(u)+o(1),}
and so $|A_n|\to 0$. Since the uniform bound on $\|\dot u(t)\|_2$ implies the uniform continuity of $\|u(t)\|_2$, we deduce that 
\EQ{
 \liminf_{n\to\I}\inf_{t\in I_n}\|u(t)\|_2^2 \ge E(u)/4,}
and therefore, using $2F(u)\le c\|u\|_2^2$ as well, 
\EQ{
 \limsup_{n\to\I}\sup_{t\in I_n}[\|\dot u(t)\|_2^2+\|\na u(t)\|_2^2] \le \left[2-(1-c)/4\right]E(u),}
which implies that $u$ is uniformly subcritical on $I_n$ (for large $n$), for the Strichartz estimate. In particular, there is some $\de=\de(c)>0$ such that for any interval $I\subset I_n$, 
\EQ{
 \|f'(u)\|_{L^1_tL^2_x(I)} \lec \|u\|_{L^8_tL^{16}_x(I)}^{8\de}\|u\|_{L^4_t(B_{\I,2}^{1/4}\cap B^{3/8}_{8,2})}^{4(1-\de)},}
where the last norm is uniformly bounded thanks to the Strichartz estimate (see Section 3.1 and in particular Corollary 3.2 of \cite{2DcNLKG}, but we do not need the scattering since $L$ is fixed). 
Let $v_n$ be the free solution with $\vec v_n(T_n)=\vec u(T_n)$. Then by the energy and interpolation inequalities, 
\EQ{
 \|\vec v_n-\vec u\|_{L^\I_tL^2_x(I_n)} \lec \|u\|_{L^8_tL^{16}_x(I_n)}^{8\de}
 \pt\lec \|\na u\|_{L^2_{t,x}(I_n)}^{2\de}\|\na u\|_{L^\I_tL^2_x(I_n)}^{5\de}\|u\|_{L^\I_tL^2_x(I_n)}^{\de} 
 \pr\lec \|\na u\|_{L^2_{t,x}(I_n)}^{2\de}\to 0,}
and so 
\( 
 \|\na v_n\|_{L^2_{t,x}(I_n)} \to 0. \)
Decompose $v_n$ in the time phase by
\EQ{
 v_n(T_n+t)=e^{it\LR{\na}}\fy_n^+ + e^{-it\LR{\na}}\fy_n^-, \pq \fy_n^\pm\in H^1,}
then the mean kinetic energy is computed in the Fourier space 
\EQ{
 \pt\int_0^L\||\x|[e^{it\LR{\x}}\hat\fy_n^++e^{-it\LR{\x}}\hat\fy_n^-]\|_{L^2_\x}^2dt
 \pr=L\||\x|\hat\fy_n^+\|_2^2 + L\||\x|\hat\fy_n^-\|_2^2+\Im\LR{\LR{\x}^{-1}(e^{2iL\LR{\x}}-1)|\x|\hat\fy_n^+||\x|\hat\fy_n^-},}
which is equivalent to $L[\|\na\fy_n^+\|_2^2+\|\na\fy_n^-\|_2^2]$ since $L\ge 2$. Thus we conclude that 
\EQ{
 \|\na v_n\|_{L^\I_t(\R;L^2_x)} \to 0 \pq (n\to\I),}
which allows us to resort to the small data scattering from $t=T_n$ for large $n$. Indeed, since the kinetic energy is uniformly small for the free solution $v_n$, we can perform the iteration argument in function spaces where exponential nonlinearity $f(u)\sim e^{\al|u|^2}$ could be controlled for any $\al>0$ by the Strichartz estimate.

Although it is the desired conclusion, it contradicts the assumption \eqref{vanish mean kine}. Hence we have only the following case. 

\subsubsection{Case 2: Growing virial with zero momentum} Now we may assume that for some $L\ge 2$  and $\de>0$, 
\EQ{ \label{kine lowbd}
 \inf_{T>0}\int_{T}^{T+L}\|\na u(t)\|_2^2dt \ge \de.}
In addition to that, we assume that the conserved total momentum is zero
\EQ{
 P(u):=\LR{\dot u|\na u}=0.}
Otherwise it will be reduced to this by the Lorentz transform in the next section. Now we introduce the {\it concentration radius} $R(t)$ in a way similar to \cite{2DcNLKG}. Noting that 
\EQ{
 E_F^{(1-c)}(u) = E(u)-K_\I^{(c)}(u) \le E(u),}
we define $R(t)$ for a fixed small $0<\e\ll\de/L$ 
\EQ{
 R(t):=\inf\{r>0\mid \exists\al\in\R^2,\ \int_{|x-\al|<r}e_F^{(1-c)}dx\ge E(u)-\e\}.}
Then for any $\al\in\R^2$ we have 
\EQ{
 \pt\int_{|x-\al|<R(t)}e_F^{(1-c)} dx \le E(u)-\e, 
 \pq \int_{|x-\al|>R(t)}e_F^{(1-c)} dx \le \e.}
The latter bound implies by Trudinger-Moser in the exterior disk, 
\EQ{
 \int_{|x-\al|>R(t)}[e_F+Df(u)+f(u)]dx \lec \e,}
provided $\e>0$ is small enough. 

By the same argument as in \cite{2DcNLKG}, we deduce that if 
\EQ{
 \liminf_{t\to\I}R(t)\ge 6,}
then for large $t$, $u$ is in the subcritical range for the Strichartz estimate decomposed into disks, so that we can obtain the scattering in the same way as in \cite{2DcNLKG}. Hence we may assume that there is $T_n\nearrow\I$ such that 
\EQ{
 R(T_n)< 6, \pq L \ll |T_n-T_{n+1}|\to\I.}
For each $n\in\N$, there is $\al_n\in\R^2$ such that 
\EQ{
 \int_{|x-\al_n|<6} e_F^{(1-c)} dx \ge E(u)-\e.}
Now we consider the virial identity localized to the cone 
\EQ{
 D_n:=\{(t,x)\in(T_n,T_{n+1})\times\R^2\mid |x-\al_n|<6+|t-T_n|\}.}
Let $\chi\in C_0^\I(\R^2)$ be a cut-off function satisfying 
\EQ{
 |x|\le 1\implies \chi(x)=1,\pq |x|\ge 2\implies\chi(x)=0.}
Let $w_n$ be a smooth cut-off for $D_n$, defined by 
\EQ{
 w_n(t,x)=\chi((x-\al_n)/(6+|t-T_n|)).}
Before using the virial identity, we need to estimate $|\al_n-\al_{n+1}|$ by using $P(u)=0$. Multiplying the equation with $(x-\al_n)w_n\dot u$, we get a localized center of energy 
\EQ{
 C_n(t):=\LR{(x-\al_n)w_n|e_N(u)}, \pq \dot C_n(t)=-P(u)+O(X_n(t)),} 
where $X_n(t)$ gathers the exterior energy terms 
\EQ{
  X_n(t):=\int_{(t,x)\not\in D_n}[e_F+Df(u)+f(u)]dx \pq(T_n \le t \le T_{n+1}).}
Since $X_n(T_n)\lec\e$, Lemma \ref{ext small} together with Trudinger-Moser implies $X_n(t)\lec\e$. 
For the boundary value at $t=T_n$ we have 
\EQ{
 |C_n(T_n)| \lec \|(x-\al_n)w_n(T_n)\|_\I \lec 1.}
Noting that $|\al_{n+1}-\al_n|\le |T_{n+1}-T_n|+O(1)$ by finite propagation speed, we have  
\EQ{
 \pt |C_n(T_{n+1})-(\al_{n+1}-\al_n)E(u)|
 \pr= \left|\int_{|x-\al_{n+1}|<6\ \cup\ |x-\al_{n+1}|>6}[(x-\al_n)w_n-(\al_{n+1}-\al_n)]e_N(u)dx\right|
 \pr\lec 1 + (\|(x-\al_n)w_n(T_{n+1})\|_\I+|\al_{n+1}-\al_n|)X_{n+1}(T_{n+1})
 \pr\lec 1 + \e(|T_{n+1}-T_n|+|\al_{n+1}-\al_n|).}
Thus we obtain 
\EQ{
 |\al_{n+1}-\al_n| \lec 1 + \e|T_{n+1}-T_n|.}

Next, the multiplier $2w_n[(x-\al_n)\cdot\na u+u]$ yields the localized virial identity 
\EQ{ \label{loc virial}
 V_n(t):=\LR{2w_n\dot u|(x-\al_n)\cdot\na u+u}, \pq \dot V_n=-K_2(u)+O(X_n),}
On the other hand, \eqref{kine lowbd} with Lemma \ref{K2 unif bd} implies that 
\EQ{
 \int_{T_n}^{T_{n+1}}K_2(u)dt \gec \frac{\de}{L}(T_{n+1}-T_n) \gg \e(T_{n+1}-T_n).}
So from \eqref{loc virial} we obtain 
\(
  V_n(T_{n+1})-V_n(T_n) \ll -\e(T_{n+1}-T_n), \)
contradicting 
\EQ{
 |T_n-T_{n+1}|\to\I,\ |V_n(T_n)|+|V_n(T_{n+1})|\lec 1+\e|T_{n+1}-T_n|,}
where the last estimate follows from the same argument as above for $C_n(T_{n+1})$. 

\subsubsection{Case 3: General momentum} 
Recall that the Lorentz transform $u(t,x)\mapsto w:=u(\al t+\be\cdot x,\al x+\be t)$ for any $(\al,\be)\in\mathbb{H}^d=\{(\al,\be)\in\R^{1+d}\mid \al=\sqrt{1-|s|^2}\}$ preserves global solutions, while 
\EQ{
 E(w)=\al E(u)+\be\cdot P(u),\pq P(w)=\al P(u)+ \be E(u).}
Hence if $u$ is a global solution with $E(u)>|P(u)|$, then there is $(\al,\be)\in\mathbb{H}^d$ such that $w$ is another global solution with $P(w)=0$ and $E(w)^2=E(u)^2-|P(u)|^2$. This is the case for any solution in $\K^+$, since 
\EQ{
 E(u) \ge E_F^{(1-c)}(u) \ge |\LR{\dot u|\na u}|,}
where the equalities hold only if $u\equiv 0$. 
Then by the result in the previous sections, $w$ scatters, i.e.~$\|\vec w-\vec z\|_2\to 0$ as $t\to\I$ for some free solution $z$. 
By the energy estimate, this asymptotic is valid also on space-like planes. More precisely, the pull back $v(t,x):=z(\al t-\be\cdot x,\al x -\be t)$ is a free solution such that $\|\vec u-\vec v\|_2\to 0$. 

To see this, cut off far exterior cones using Lemma \ref{ext small}, then the evolution of the energy of $w$ and $z$ on the space-like planes inside the light cones are controlled, via the energy identity, by the $L^1_tL^2_x$ norm of $f'(w)$ in some time slab, which is globally bounded and vanishing as $t\to\I$ by the scattering result for $w$. 

The above asymptotic implies also that $\|u(t)\|_2$ is bounded below for large time, hence $u$ is in the subcritical range and has global Strichartz norms. 

\section{Scattering in $\K^+$ in higher dimensions} 
\subsection{Global wellposedness in higher dimensions}\label{3Dglobal}
Suppose that the solution $u$ in $\K^+$ is defined for $-T<t<0$ with $E(u)=m$, blowing up as $t\to-0$. The local wellposedness argument by the Strichartz estimate implies that 
\EQ{
 \|u\|_{S(-T,0)}=\I}
for some appropriate Strichartz norm, say 
\( 
 S:=L^{2(d+1)/(d-2)}_{t,x}. \)

First we prove that the total energy has to concentrate inside a light cone. 
For any $0<\e\ll 1$ and $t\in(-T,0)$, we introduce the concentration radius at a fixed center $\al\in\R^d$, in a barely different definition from the 2D case 
\EQ{
 r_\e(t,\al):=\inf\{R>0 \mid \int_{|x-\al|>R}e_F(t)dx\le \e m\}}
and suppose that for some $t_0\in(-T,0)$, we have 
\EQ{ \label{far energy smallness}
 \log[r_\e(t_0,\al)/|t_0|] \gg 1/\e.}
Then by dyadic decomposition (in $|x-\al|$) of the Hardy and Sobolev inequalities, we can find some $R\in(2|t_0|,r_\e(t_0,\al)/2)$ such that 
\EQ{
 \int_{R<|x-\al|<2R}\frac{|u|^2}{|x-\al|^2}+|u|^{2^\star}dx \lec \frac{\|\na u\|_2^2}{\log[r_\e(t_0,\al)/|t_0|]} \ll \e m.}
Let $\chi\in C_0^\I(\R^d)$ satisfy $\chi(x)=1$ for $|x|\le 1$ and $\chi(x)=0$ for $|x|\ge 2$. Let $\chi_R(x):=\chi((x-\al)/R)$ and let $w$ be the solution of NLKG with 
\EQ{ 
 (w(t_0),\dot w(t_0))=\chi_R(u(t_0),\dot u(t_0)).}
Then we have 
\EQ{
 E(w) \pt\le \int_{|x-\al|<2R}e_N(t_0)dx-\int|u|^2\chi_R\De\chi_Rdx + \int_{R<|x-\al|<2R}\frac{|u|^{2^\star}}{2^\star}dx
 \pr\le E(u)-\int_{|x-\al|>R_\e(t_0)}e_N(t_0)dx + \frac{Cm}{\log[r_\e(t_0,\al)/|t_0|]}
 \pr\le (1-\e/2)m<m.}
Moreover, since 
\EQ{
 [\|\dot u\|_2^2+\|u\|_2^2]/2+\|\na u\|_2^2/d=E(u)-K_2(u)/2^\star\le m,}
we have 
\EQ{
 \int_{|x-\al|<r_\e(t_0,\al)}\frac{|\dot u|^2+|u|^2}{2}+\frac{|\na u|^2}{d}dx
 \le m(1-2\e/d),}
and so 
\( 
 \|\na w(t_0)\|_2^2 \le dm(1-\e/d)<dm=\|\na Q\|_2^2, \)
which implies that $K_2(w(t_0))\ge 0$. Hence by the result in \cite{ScatBlow}, $w$ is a global scattering solution. Moreover, the finite propagation speed of the linear equation implies that $w=u$ on $|x-\al|<2|t_0|-|t-t_0|$. Thus $u$ can be extended to $|x-\al|+t<|t_0|$. 

Therefore, if \eqref{far energy smallness} holds for some $\e>0$ and $t_0$ around each $\al\in\R^d$, then $u$ can be extended to some positive time. Note that an exterior cone is covered by Lemma \ref{ext small}. 
Hence to blow up at $t=0$, there must be $\al\in\R^d$ around which \eqref{far energy smallness} fails for all $\e$ and all $t_0$: For some constant $M>1$, any $0<\e\ll 1$ and any $t_0\in(-T,0)$, 
\EQ{
 \int_{|x-\al|>|t_0|M^{1/\e}}e_F(t_0)dx \le \e m.}
Hence by Lemma \ref{ext small}, we have for all $t\in(-T,0)$, 
\EQ{
 \int_{|x-\al|>|t_0|M^{1/\e}+|t-t_0|}e_F(t)dx \lec \e.}
Taking $t_0\to-0$ and then $\e\to+0$, we obtain 
\EQ{
 \supp u(t,x) \subset\{(t,x)\mid |x-\al|\le -t\},}
and so $\|u(t)\|_2\lec|t|$ by H\"older. 
Now suppose for contradiction that $M(t)\to 0$ as $t\to-0$. Then 
$K_{0}^{(c)Q}(u)=2E(u)+2F(u)-M(t)\ge 2m-o(1)$, so from \eqref{bd K}, 
\EQ{ 
 K_0^{(c)}(u)\ge\min(M(t),\de K_{0}^{(c)Q}(u))=M(t),}
for $t<0$ close to $0$, where 
\EQ{
 \ddot y/2=\|\dot u\|_2^2-K_0^{(c)}(u)-(1-c)\|u\|_2^2
 \le -2(1-c)\|u\|_2^2=-2(1-c)y.}
Hence $y(t)=\|u\|_2^2$ is concave near $t=0$, contradicting $\|u(t)\|_2^2\lec t^2$. Therefore $M(t)\not\to 0$, so there are $\de_0>0$ and $-T<t_n\nearrow 0$ such that 
\EQ{ \label{ut lowbd}
 \|\dot u(t_n)\|_2^2 \ge \de_0.}

Now apply the profile decomposition in \cite{ScatBlow} to the sequence of solutions 
\EQ{
 u_n(t):=u(t+t_n).} 
Since it is on the threshold and not scattering for $t>0$, we get exactly one profile, by the same proof as in \cite[\S 6]{ScatBlow}, except for a modification of Lemma 5.4 in an obvious way using \eqref{ut lowbd}, which ensures $J(u(t_n))\le m-\de_0/2$ is uniformly below the threshold. Thus we obtain, passing to a subsequence, 
\EQ{
 \vec u_n(0)=e^{-is_n\LR{\na}}\cT_n\fy+o(1) \pq\IN{L^2_x},}
where $\cT_n\fy:=h_n^{-d/2}\fy((x-\al_n)/h_n)$ for some sequences $s_n\in\R$, $h_n>0$, $\al_n\in\R^d$ and $\fy\in L^2$. Moreover, $h_n\to\exists h_\I\in\{0,1\}$ and $\ta_n:=-t_n/h_n\to\exists \ta_\I\in[-\I,\I]$ and the nonlinear profile $U_\I$ is defined as the solution of either the initial data problem or the final data problem given by, putting $\LR{\na}_\I:=\sqrt{-\De+h_\I^2}$, 
\EQ{
 \pt (\p_t^2-\De+h_\I)U_\I = f'(U_\I),
 \pq (e^{it\LR{\na}_\I}\fy-\vec U_\I)(\ta_n)\to 0 \IN{L^2}.}

If $U_\I$ is scattering both as $t\to\pm\I$ with finite Strichartz norm on $\R$, then it is a global approximation of $u_n$ with the scaling/translations, and so the long-time perturbation implies that $u_n$ is also a scattering solution for large $n$, contradicting the blowup of $u$. Hence $U_\I$ does not scatter both as $t\to\pm\I$. If $\ta_\I=\pm\I$, then similarly $u_n$ scatters as $t\to\pm\I$ with a uniform Strichart bound on $\pm t>0$, which is contradicting that $\|u\|_{S(-T,0)}=\I$. Hence $\ta_\I\in\R$. 
Thus we obtain 
\EQ{
 \cT_n^{-1}\vec u_n(0)=e^{i\ta_n\LR{\na}_n}\fy+o(1)=e^{i\ta_\I\LR{\na}_\I}\fy+o(1)=\vec U_\I(\ta_\I)+o(1).}
Hence $E^{(h_\I)}(U_\I)=m$, $K_2(U_\I(\ta_\I))\ge 0$, and \eqref{ut lowbd} implies 
\EQ{ \label{profile ut bd}
 \|\dot U_\I(\ta_\I)\|_2^2 \ge \de_0.}

 Since $\vec u_n(0)$ is concentrating as $n\to\I$, we must have $h_\I=0$, so the nonlinear profile $U_\I$ is a non-scattering solution of the massless NLW in $\K^+$ with $E=m$. 
According to Duyckaerts-Merle \cite{DM2}, such a solution is a scaling and translation of either $Q$ or $\pm W_-(\pm t)$, which is the global solution of NLW converging to $Q$ 
\EQ{
 \|W_--Q\|_{\dot H^1}+\|\dot W_-\|_2\to 0 \pq (t\to\I)}
and scattering for $t\to-\I$. 
Since \eqref{profile ut bd} precludes $Q$, we obtain 
\EQ{
 U_\I(t) = \s_1 R^{1-d/2}W_-(\s_2(t-T),(x-\al)/R).}
for some $\s_1,\s_2\in\{\pm 1\}$, $T\in\R$, $\al\in\R^d$ and $R>0$. 
Since $U_\I$ is scattering as $\s_2 t\to-\I$, it becomes a global approximation for $u_n$ with the scaling/translation on $\s_2 t<0$ for large $n$. Hence $u_n$ has uniformly bounded in the Strichartz norm on $\s_2 t<0$, contradicting $\|u\|_{S(-T,0)}=\I$. 

Thus we conclude that finite time blow-up is impossible. 

\subsection{Scattering in higher dimensions}
The argument is similar to the previous section. Let $u$ be a global solution in $\K^+$ with $E(u)=m$ and $\|u\|_{S(0,\I)}=\I$. First we claim that 
\EQ{
 \liminf_{t\to\I}\|\dot u\|_2>0.}
Suppose not. Then we have 
\EQ{
 0=\lim_{T\to\I}[\LR{u|\dot u}]_T^{T+1}=\lim_{T\to\I}\int_{T}^{T+1}K_0(u)dt.}
Since $K_0(u)=K_0^{(c)}(u)+(1-c)\|u\|_2^2$ and $\|u(t)\|_2$ is uniformly continuous, we deduce that $\|u\|_2\to 0$. Hence $M(t)\to 0$, and so by the same argument as in \S\ref{3Dglobal}, we obtain 
\EQ{
 \ddot y/2 \le -2(1-c)y}
for large $t$, contradicting $y\to 0$. 
Hence there are $\de_0>0$ and $t_n\nearrow\I$ such that 
\EQ{ \label{ut lowbd2}
 \|\dot u(t_n)\|_2^2 \ge \de_0.}

Now in the same way as in \S\ref{3Dglobal}, apply the profile decomposition to the sequence of solutions $u_n(t):=u(t+t_n)$, then we get exactly one profile. 

If $h_\I=0$ (i.e. the concentrating case) then the same argument implies that the profile is given by $W_-$ due to \eqref{ut lowbd2}, leading to a contradiction with $\|u\|_{S(0,\I)}=\I$. 
If $h_\I=1$, then the situation is essentially the same as in the subcritical case and so we can argue in the same way as in \cite{ScatBlow} to get a contradiction. 

\section{Acknowledgments}
The first author is thankful to Professor Yoshio Tsutsumi and all members of the Math Department at Kyoto University for their very generous hospitality.

\end{document}